\theoremstyle{plain}
\newtheorem{thm}{Theorem}[section]
\newtheorem{lemma}[thm]{Lemma}
\newtheorem{lem}[thm]{Lemma}
\newtheorem{cor}[thm]{Corollary}
\newtheorem{prop}[thm]{Proposition}
\theoremstyle{definition}
\newtheorem{rmk}[thm]{Remark}
\newcommand{\sO}{{\mathcal O}}
\newcommand{\N}{{\mathbb N}}
\newcommand{\Z}{{\mathbb Z}}
\newcommand{\Gal}{{\rm Gal}}
\newcommand{\ilim}{\mathop{\varprojlim}\limits} 
\begin{document}
\title[On Hesselholt's conjecture]{Remark on equicharacteristic analogue of
Hesselholt's conjecture on cohomology of Witt vectors}   
\author[A. Hogadi]{Amit Hogadi}
\address{School of Mathematics, Tata Institute of Fundamental
Research, Homi Bhabha Road, Colaba, Mumbai 400005, India}
\email{amit@math.tifr.res.in}
\author[S. Pisolkar]{Supriya Pisolkar}
\address{School of Mathematics, Tata Institute of Fundamental
Research, Homi Bhabha Road, Colaba, Mumbai 400005, India}
\email{supriya@math.tifr.res.in}
\date{}
\subjclass[2000]{11S25}
\keywords{Galois Cohomology, Witt vectors}
\begin{abstract}
Let $L/K$ be a finite Galois extension of complete discrete valued fields of
characteristic $p$. Assume that the induced residue field extension $k_L/k_K$ is
separable. For an integer $n\geq 0$, let $W_n(\sO_L)$ denote the ring of Witt vectors of length $n$ with coefficients in $\sO_L$. We show that the proabelian group $\{H^1(G,W_n(\sO_L))\}_{n\in \N}$ is zero. This is an equicharacteristic analogue of Hesselholt's conjecture (see
\cite{lh}) which was proved in \cite{hp} when the discrete valued fields are of
mixed characteristic. 
\end{abstract}

\maketitle

\section{Introduction} \label{intro}

Let $K$ be a complete discrete valued field with residue field of characteristic
$p > 0$. $L/K$ be a finite Galois extension with Galois group $G$. Suppose that
$k_L/k_K$ is separable. When $K$ is of characteristic zero,  Hesselholt
conjectured in \cite{lh} that the  proabelian group
$\{H^1(G,W_n(\sO_L))\}_{n\in \N}$ vanishes, where $W_n(\sO_L)$ is the ring of
Witt vectors of length $n$ with coefficients in $\sO_L$ (w.r.t. to the prime $p$).
 As explained in
\cite{lh}, this can be viewed as an analogue of Hilbert theorem $90$ for the
Witt ring $W(\sO_L)$. This conjecture was proved in some cases in \cite{lh} and
in general in \cite{hp}. In this paper we show that a similar vanishing holds
when $K$ is of characteristic $p$. 
The main result of this paper is as follows.

\begin{thm} \label{main} Let $L/K$ be a finite Galois extension of complete
discrete valued equi-characteristic fields with Galois group $G$. Assume that
the induced residue field extension is separable. Then the pro-abelian group $
\{H^1(G,W_n(\sO_L))\}$ is zero.
\end{thm}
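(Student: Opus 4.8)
The plan is to argue by dévissage along the length filtration of the Witt vectors, reducing the whole tower to the length-one case $W_1(\sO_L)=\sO_L$, and then to promote a uniform annihilation statement to the desired pro-vanishing. Throughout write $A_n=H^1(G,W_n(\sO_L))$, with transition maps induced by the restriction $R\colon W_{n+1}(\sO_L)\to W_n(\sO_L)$; the assertion is that $\{A_n\}$ is pro-zero, i.e. for every $n$ some iterated transition $A_{n+N}\to A_n$ vanishes. The two basic $G$-equivariant short exact sequences I would use are $0\to \sO_L\xrightarrow{V^{n}}W_{n+1}(\sO_L)\xrightarrow{R}W_n(\sO_L)\to 0$ coming from the top Verschiebung, and $0\to W_{n-1}(\sO_L)\xrightarrow{V}W_n(\sO_L)\xrightarrow{w_0}\sO_L\to 0$ coming from the first ghost component; both exhibit each graded piece of the length filtration as a copy of $\sO_L$.

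First I would record the length-one input. By the normal basis theorem $L\cong K[G]$ as $K[G]$-modules, so for a $G$-stable $\sO_K$-lattice $M\cong \sO_K[G]$ one has $H^i(G,M)=0$ for $i>0$; comparing $M$ with $\sO_L$ via inclusions of full lattices shows that $H^1(G,\sO_L)$ is annihilated by $t^{c}$ for a fixed $c$ (equivalently, by the different). The crucial refinement is that multiplication by the Teichm\"uller element $[t]$ acts on the $i$-th graded piece $\mathrm{gr}^{i}W_n(\sO_L)\cong \sO_L$ as multiplication by $t^{p^{i}}$. Feeding this into the long exact sequences of the length filtration, the annihilator exponents of the successive graded contributions form the convergent series $\sum_{i\ge 0}\lceil c/p^{i}\rceil$, so there is a single constant $C$, independent of $n$, with $[t]^{C}A_n=0$ for all $n$.

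Before the main step I would carry out two standard reductions. Since every $W_n(\sO_L)$ is $p$-primary torsion, a restriction–corestriction argument lets me replace $G$ by a Sylow $p$-subgroup, so I may assume $G$ is a $p$-group. Using the separability of $k_L/k_K$, the maximal unramified subextension contributes nothing: for an unramified (\'etale) Galois extension $W_n$ of the integers is an induced, hence cohomologically trivial, module, so via Hochschild–Serre I may assume $L/K$ is totally ramified with $G$ a $p$-group.

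The heart of the matter, and the step I expect to be the main obstacle, is to pass from the uniform annihilation $[t]^{C}A_n=0$ to genuine pro-vanishing. This does \emph{not} follow formally: on the associated graded of the length filtration the transition maps are the identity, so the graded pro-system is pro-constant rather than pro-zero, and the vanishing must be extracted from the extension data. My plan is to exploit the operator identities $VR=\underline V$ and $\underline V\,\underline F=p$ together with the nilpotence $\underline V^{\,n}=0$ on $W_n(\sO_L)$, where $\underline V,\underline F$ denote the length-preserving Verschiebung and Frobenius; these give $R^{m}_{*}V^{m}_{*}=(\underline V_{*})^{m}=0$ on $A_n$ once $m\ge n$, and, combined with the cokernel bound $[t]^{c}A_{n+1}\subseteq V_{*}A_n$ read off from the ghost sequence, the containments $[t]^{kc}\,\mathrm{image}(R^{m}_{*})\subseteq (\underline V_{*})^{k}A_n$. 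The delicate point is that each such relation costs a power of $[t]$, so it must be married to the totally ramified $p$-group reduction and an explicit analysis showing that a class surviving arbitrarily far up the restriction tower is driven into ever deeper Verschiebung filtration, where the uniform bound $[t]^{C}=0$ finally annihilates it. This is the equicharacteristic counterpart of the decisive $p$-divisibility estimate of \cite{hp}, and is where essentially all the work lies.
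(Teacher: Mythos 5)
Your proposal is not a proof: you yourself identify the passage from the uniform bound $[t]^{C}A_n=0$ to genuine pro-vanishing as ``the heart of the matter\ldots where essentially all the work lies,'' and that step is never carried out, only sketched. Worse, your own (correct) observation that the graded pro-system of the length filtration is pro-\emph{constant} shows that the formal Witt-vector calculus you propose to use ($VR=\underline V$, $\underline V^{\,n}=0$, projection formula, annihilator bookkeeping) cannot by itself detect pro-vanishing; some genuinely arithmetic input is required, and none appears in your sketch. The paper supplies exactly this input, by quite different means. It first reduces, via \cite[Lemma 3.1]{hp}, to a totally ramified \emph{cyclic extension of degree $p$} (your reduction stops at ``totally ramified with $G$ a $p$-group,'' which is not enough to run the key computation); it then writes $L=K(\lambda)$ with $\lambda^p-\lambda=f$ and $v_K(f)=-s$ prime to $p$, computes $\sO_L^{\operatorname{tr}=0}$ and $(\sigma-1)\sO_L$ explicitly in the basis $1,\lambda,\dots,\lambda^{p-1}$ (Proposition \ref{h1}), and deduces that any trace-zero element representing a \emph{nonzero} class in $H^1(G,\sO_L)$ has $v_L(x)\le s-1$ (Corollary \ref{s}). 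By \cite[Lemma 1.1]{lh}, pro-vanishing then reduces to showing that for large $n$ the composite $H^1(G,W_n(\sO_L))\to H^1(G,\sO_L)$ is zero, and this is proved by pure valuation estimates: a trace-zero Witt vector $(x_0,\dots,x_{n-1})$ satisfies the recursion \eqref{one}, the identity $v_K(F(x))=v_L(x)$ of Lemma \ref{tr} converts it into lower bounds $v_L(x_{n-i})\ge \frac{s(p-1)}{p}\bigl(1+\frac1p+\cdots+\frac1{p^{i-2}}\bigr)$, and since these bounds tend to $s>s-1$, one gets $v_L(x_0)\ge s$ once $n$ is large. Nothing in your proposal plays the role of Corollary \ref{s} or Lemma \ref{tr}; they are the theorem, not technicalities.

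A secondary but genuine error: your justification of the uniform annihilation $[t]^{C}A_n=0$ is wrong as stated, because the series $\sum_{i\ge 0}\lceil c/p^{i}\rceil$ is \emph{not} convergent --- every term is at least $1$, so summing the graded contributions of the length filtration only yields a bound growing linearly in $n$. The uniform statement itself is salvageable, but by a different argument: from the ghost sequence one gets $[t]^{c}A_n\subseteq V_*A_{n-1}$, and the projection formula $[t]^{m}V_*(y)=V_*\bigl([t]^{pm}y\bigr)$ gives the recursion $C_n\le c+\lceil C_{n-1}/p\rceil$, whose solutions stay bounded (roughly by $(c+1)p/(p-1)$); it is the recursion, not the unrolled sum, that converges. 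In any case this repaired statement does you no good, since the decisive step that would exploit it is the one you left open.
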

In order to prove this conjecture one easily reduces to the case where $L/K$ is
a totally ramified Galois extension of degree $p$ (see \cite[Lemma 3.1]{hp}). We make
the argument in \cite{hp} work in the equicharacteristic case using an explicit
description of the Galois cohomology of $\sO_L$ when $L/K$ is an Artin-Schreir
extension (see Proposition \ref{h1}). 

We remind the reader that a proabelian group indexed by $\N$ is an inverse
system of abelian groups $\{A_n\}_{n\in \N}$ whose vanishing means that for
every $n\in \N$, there exists an integer $m>n$ such that the map $$ A_m\to A_n$$
is zero (see \cite[Section 1]{jannsen}). This is clearly implies the vanishing
of $\ilim_n H^1(G,W_n(\sO_L))$. It also implies the vanishing of
$H^1(G,W(\sO_L))$ (with $W(\sO_L)$ being considered as discrete $G$-mdoule) by
 \cite[Corollary 1.2]{hp}. 

\begin{rmk}
One may also consider an analogue of Theorem \ref{main} when $K$ is of equi-characteristic zero. However, in this case, all extensions $L/K$ are tamely ramified and the vanishing 
$$H^1(G(L/K),W_n(\sO_L))=0 \ \forall \ n\geq 0.$$ 
can be easily deduced from the fact that $\sO_L$ is a projective $\sO_K[G]$ module (see \cite[I. Theorem(3)]{frohlich}).
\end{rmk}

\noindent {\bf Acknowledgement}: We thank the referee for several useful comments and suggestions.

\section{Cohomology of integers in Artin-Schreier extensions}

Let $K$ be a complete discrete valued field of characteristic $p$ as before. Let
$\sO_K$ and $k$ denote the discrete valuation ring and residue field of $K$
respectively. Let $L/K$ be a Galois extension of degree $p$. Recall that the
ramification break (or lower ramification jump) of this extension, to be
denoted by $s=s(L/K)$, is  the smallest non-negative integer such that the
induced action of $\Gal(L/K)$ on $\sO_L/m_L^{s+1}$ is faithful, where $m_L$ is
the maximal ideal of $\sO_L$ (\cite{fesenko}, II, 4.5). Thus unramified extensions are precisely the
extensions with ramification break equal to zero. We recall the following well
known result.

\begin{prop}{\rm (}see \cite{hasse} or \cite[Proposition 2.1]{lt}{\rm )}
\label{v_K(f)}Let $L/K$ be a Galois extension of degree $p$ of complete discrete
valued fields of characteristic $p$. There exists an element $f\in K$ such that
$L$ is obtained by joining a root of the polynomial 
 $$ X^p-X-f=0$$ 
Further one can choose $f$ such that $v_K(f)$ is coprime to $p$. In this case
$$v_K(f)=-s$$ where $s$ is the ramification break of $\Gal(L/K)$. 
\end{prop}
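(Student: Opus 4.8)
The plan is to combine Artin--Schreier theory with an explicit valuation computation. Write $\wp(x)=x^p-x$ for the Artin--Schreier operator. Since $\Gal(L/K)$ has order $p$ and $K$ has characteristic $p$, Artin--Schreier theory identifies the cyclic degree $p$ extensions of $K$ with the nonzero classes of $K/\wp(K)$; hence $L=K(\alpha)$ for some $\alpha$ with $\alpha^p-\alpha=f$, $f\in K$, and a generator $\sigma$ of $\Gal(L/K)$ may be taken with $\sigma(\alpha)=\alpha+1$. This gives the first assertion. For the normalization I would exploit the freedom $\alpha\mapsto\alpha-g$ ($g\in K$), which replaces $f$ by $f-\wp(g)$ without changing $L$.

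The reduction rests on the valuation behaviour of $\wp$: for $v_K(g)<0$ one has $v_K(\wp(g))=p\,v_K(g)$, while for $v_K(g)>0$ one has $v_K(\wp(g))=v_K(g)$. Thus if $v_K(f)=m<0$ with $p\nmid m$ there is nothing to do. If $p\mid m$, write $m=pm'$ and let $\bar u\in k^{\times}$ be the residue of $f\pi_K^{-m}$; granting that $\bar u=\bar c^{\,p}$ is a $p$-th power in $k$, I lift $\bar c$ to a unit $c\in\sO_K$ and subtract $\wp(c\,\pi_K^{m'})$, which cancels the leading term and strictly raises $v_K(f)$. Iterating, the valuation increases at each step and is bounded above by $0$, so after finitely many steps either $v_K(f)\ge 0$ (the unramified case, $s=0$) or $v_K(f)<0$ with $\gcd(v_K(f),p)=1$.

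The crux, which I expect to be the main obstacle, is justifying $\bar u\in k^{p}$ whenever $p\mid v_K(f)<0$. When $k$ is perfect this is automatic; in general it is exactly the condition separating total from ferocious (inseparable) ramification. From $\alpha^p-\alpha=f$ and $v_K(f)<0$ one gets $p\,v_L(\alpha)=v_L(f)=e\,v_K(f)$, so $v_L(\alpha)=e m'$ and $\alpha\,\pi_K^{-m'}$ is a unit of $\sO_L$; since $v_L\bigl(\alpha\,\pi_K^{-m}\bigr)>0$ while $f\,\pi_K^{-m}$ has residue $\bar u$, the residue of $(\alpha\,\pi_K^{-m'})^{p}$ equals $\bar u$, whence $\bar u\in k_L^{\,p}$. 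Under the standing separability assumption on the residue extension (in force in our application, and excluding ferocious ramification one has $k_L=k_K$ in the ramified case), this forces $\bar u\in k^{p}$; equivalently, a non-$p$-th-power leading term would make $k_L/k_K$ inseparable, contrary to hypothesis. So the reduction always terminates in one of the two stated forms.

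Finally I identify $v_K(f)$ with $-s$ in the ramified case. Assume $v_K(f)=m<0$ with $\gcd(m,p)=1$. From $p\,v_L(\alpha)=e\,m$ and $\gcd(m,p)=1$ we get $p\mid e$, hence $e=p$, so $L/K$ is totally ramified with $v_L(\alpha)=m$. As $\gcd(v_L(\alpha),v_L(\pi_K))=\gcd(m,p)=1$, a monomial $\pi_L=\pi_K^{a}\alpha^{b}$ with $ap+bm=1$ is a uniformizer (and necessarily $p\nmid b$). I then compute
$$\sigma(\pi_L)-\pi_L=\pi_K^{a}\alpha^{b}\bigl((1+\alpha^{-1})^{b}-1\bigr).$$
Since $v_L(\alpha^{-1})=-m>0$ and $p\nmid b$, the bracket has dominant term $b\,\alpha^{-1}$ of valuation $-m$, so $v_L(\sigma(\pi_L)-\pi_L)=ap+bm-m=1-m$. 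This pins down the ramification filtration of the generator $\sigma$ and shows the ramification break equals $-m$. Comparing with the definition of $s$ gives $v_K(f)=m=-s$, completing the argument.
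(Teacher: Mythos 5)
The paper never proves this proposition at all: it is quoted outright from \cite{hasse} and \cite[Proposition 2.1]{lt}. So your self-contained argument is by necessity a different route, and it is essentially correct. Its skeleton (Artin--Schreier existence, normalizing $f$ modulo $\wp(K)$ by repeatedly killing leading terms, then computing $v_L(\sigma(\pi_L)-\pi_L)$ for a monomial uniformizer $\pi_L=\pi_K^{a}\alpha^{b}$) is the classical one, but you add a point of real value over the bare citation: the cited sources work with perfect (indeed finite) residue fields, whereas the paper's standing hypothesis is only that $k_L/k_K$ is separable. Your treatment of the crux --- when $p\mid v_K(f)<0$, the leading residue $\bar u$ is the residue of $(\alpha\pi_K^{-m'})^{p}$, hence a $p$-th power in $k_L$, hence a $p$-th power in $k_K$ because a purely inseparable subextension of the separable extension $k_L/k_K$ is trivial --- is exactly what makes the reduction work in this generality. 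And some such hypothesis is genuinely needed: for $K=\F_p(t)((u))$ and $f=tu^{-p}$, no element of $f+\wp(K)$ has valuation prime to $p$ (the obstruction is precisely $t\notin k^p$), and the corresponding degree-$p$ extension is ferocious, so the proposition as stated is false for general imperfect residue fields.

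Two loose ends, both minor. First, your last step converts $v_L(\sigma(\pi_L)-\pi_L)=1-m$ into a statement about the break; this requires $\min_{x\in\sO_L}v_L(\sigma(x)-x)=v_L(\sigma(\pi_L)-\pi_L)$, i.e. $\sO_L=\sO_K[\pi_L]$, which is standard for totally ramified extensions of complete discretely valued fields but should be invoked explicitly, since the break is defined through the action on all of $\sO_L/m_L^{j}$. Second, what your computation establishes is the standard normalization of the lower jump: $G_i=G$ for $i\leq -m$ and $G_i=1$ for $i>-m$, so the break is $-m$ in the convention of Serre, of \cite{lt}, and of the rest of this paper (e.g. Proposition \ref{ol} uses $v_L(\lambda)=-s$). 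Be aware that the paper's literal phrasing of the definition of $s$ (``smallest $s$ such that the action on $\sO_L/m_L^{s+1}$ is faithful'') is shifted by one from this convention --- taken literally it would yield $1-m$ here --- so your concluding ``comparison with the definition'' matches the intended standard notion rather than the paper's wording; that is a defect in the paper's definition, not in your argument.
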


We now fix an $f\in K$ given by the above proposition. Clearly, if $v_K(f)>0$
then by Hensel's lemma $X^p-X-f$ already has a root in $K$. If $v_K(f)=0$ then
the extension given by adjoining the root of this polynomial is an unramified
extension. 

\begin{prop}\label{ol}
Let $L/K$, $f\in K$ be as above. Assume $L/K$ is totally ramified. Let $\lambda$
be a root of $X^p-X-f$ in $L$. Let $s$ be the ramification break of $\Gal(L/K)$.
Then the discrete valuation ring $\sO_L$, is the subset of $L$ is given by 
$$ \sO_L = \{ \displaystyle{\sum_{i=0}^{p-1}}a_i\lambda^i  \ | \ a_i\in \sO_K \
\text{with} \ v_K(a_i) \geq \frac{is}{p} \}.$$
\end{prop}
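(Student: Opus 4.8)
The plan is to compare valuations term by term, exploiting that the ramification break $s$ is coprime to $p$. First I would fix the normalized valuation $v_L$ on $L$, so that $v_L|_K = p\,v_K$ since $L/K$ is totally ramified of degree $p$. Reading off the defining relation $\lambda^p - \lambda = f$ and using $v_K(f) = -s$ from Proposition \ref{v_K(f)}, the terms $\lambda^p$, $\lambda$, $f$ force $v_L(\lambda) = -s$: indeed $v_L(f) = -ps < 0$ (note $s \ge 1$ in the totally ramified case), so $\lambda$ cannot be integral, and then $v_L(\lambda^p) = p\,v_L(\lambda)$ is strictly smaller than $v_L(\lambda)$, whence $p\,v_L(\lambda) = v_L(\lambda^p - \lambda) = v_L(f) = -ps$.

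Next, since $[L:K] = p$ and $\lambda$ generates $L$ over $K$, the set $\{1, \lambda, \dots, \lambda^{p-1}\}$ is a $K$-basis of $L$, so every $x \in L$ has a unique expression $x = \sum_{i=0}^{p-1} a_i \lambda^i$ with $a_i \in K$. The crucial observation is that the valuations of the individual terms,
$$v_L(a_i \lambda^i) = p\,v_K(a_i) - is,$$
are pairwise distinct whenever the corresponding $a_i$ are nonzero: modulo $p$ they equal $-is$, and since $\gcd(s,p) = 1$ the residues $-is$ for $0 \le i \le p-1$ are all different. By the ultrametric property, distinct term valuations give
$$v_L(x) = \min_{\,0 \le i \le p-1,\ a_i \neq 0} \bigl(p\,v_K(a_i) - is\bigr).$$
This identity is the heart of the argument and is exactly where coprimality of $s$ and $p$ is used.

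The two inclusions then follow quickly. Writing $S$ for the set on the right-hand side of the proposition, the condition $v_K(a_i) \ge is/p$ is equivalent to $p\,v_K(a_i) - is \ge 0$, i.e.\ to $a_i \lambda^i \in \sO_L$; since $\sO_L$ is a ring, any finite sum of such terms lies in $\sO_L$, giving $S \subseteq \sO_L$. Conversely, if $x = \sum_i a_i \lambda^i \in \sO_L$ then $v_L(x) \ge 0$, and by the displayed minimum formula every nonzero term already satisfies $p\,v_K(a_i) - is \ge 0$, that is $v_K(a_i) \ge is/p$; hence $x \in S$ and $\sO_L \subseteq S$.

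The only point requiring care is the possibility that some $a_i$ vanish: the distinct-valuation argument applies only to the nonzero terms, but this affects neither the minimum formula nor either inclusion. I expect the main (and essentially the only) obstacle to be establishing the minimum formula cleanly, since it rests entirely on the coprimality $\gcd(s,p)=1$ furnished by Proposition \ref{v_K(f)}; once that is in hand, both containments are immediate.
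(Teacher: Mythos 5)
Your proof is correct and takes essentially the same route as the paper's: unique expansion in the $K$-basis $\{1,\lambda,\dots,\lambda^{p-1}\}$, the observation that the term valuations $v_L(a_i\lambda^i)=p\,v_K(a_i)-is$ are pairwise distinct because they are distinct modulo $p$ (using $\gcd(s,p)=1$), and the resulting ultrametric equality that reduces integrality of the sum to integrality of each term. The only difference is presentational: you spell out the derivation of $v_L(\lambda)=-s$ and the treatment of vanishing coefficients, both of which the paper leaves implicit.
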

\begin{proof}
Clearly the set $\{1,\lambda,\cdots,\lambda^{p-1}\}$ is a $K$-basis of $L$. Thus
any element $x\in L$ can be  written  uniquely in the form 
$$ x = \sum_{i=0}^{p-1}a_i\lambda^i. $$
Note that $v_L(\lambda)=v_K(f)=-s$ is coprime to $p$ by the choice of $f$. Since
$L/K$ is ramified, $s$ is nonzero. Moreover $v_L(a_i)=pv_K(a_i)$
is divisible by $p$. We thus conclude that for each $0\leq i \leq p-1$, the
values of $v_L(a_i \lambda^i)$ are all distinct modulo $p$, and hence, distinct.
 
Thus $$v_L(\sum_{i=0}^{p-1}a_i\lambda^i)\geq 0 \  \text{ if and only if }
v_L(a_i\lambda^i)\geq 0 \ \ \text{for all } \ 0 \leq i<p$$
But $v_L(a_i\lambda^i)=pv_K(a_i)-is$. This proves the claim.
\end{proof}

\begin{lem}\label{sum}
Let $p$ be a prime number as before.  Let 
$$S_k = \displaystyle{\sum}_{n=0}^{p-1}n^k$$ Then
\begin{enumerate}
 \item $S_k\equiv 0 \ {\rm mod} \ p \ \ {\text if}\  0\leq k \leq p-2$
 \item $S_{p-1} \equiv -1 \ {\rm mod}\ p $
\end{enumerate}
\end{lem}
\begin{proof}
The first congruence follows from the recursive formula (see
\cite[(4)]{barbara})
$$ S_k=\frac{1}{k+1}\left( p^{k+1}-p^k -
\displaystyle{\sum}_{j=0}^{k-2}\binom{k}{j}S_{j+1} \right)$$
and using the fact that when $k\leq p-2$, $k+1$ is invertible modulo $p$. $(2)$
follows from Fermat's little theorem.
\end{proof}

We now state an explicit description of $H^1(G,\sO_L)$.

\begin{prop}\label{h1}
Notation as in \eqref{v_K(f)}. Let $\sigma$ be a generator of $\Gal(L/K)$. Let
$\sO_L^{\operatorname{tr}=0}$ denote the set of all trace zero elements in
$\sO_L$ and $$(\sigma-1)\sO_L= \{\sigma(x)-x \ | \ x\in \sO_L\}.$$  Then,
\begin{enumerate}
 \item $\sO_L^{\operatorname{tr}=0} =
\{\displaystyle{\sum_{i=0}^{p-2}}a_i\lambda^i \ | \ v_K(a_i)\geq \frac{is}{p}
\}$
 \item $(\sigma-1)\sO_L=   \{ \displaystyle{\sum_{i=0}^{p-2}} a_i\lambda^i \ | \
v_K(a_i)\geq \frac{(i+1)s}{p} \} $
\end{enumerate}
\end{prop}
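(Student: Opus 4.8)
The plan is to work concretely in the $K$-basis $\{1,\lambda,\dots,\lambda^{p-1}\}$ furnished by Proposition \ref{ol}, using that the conjugates of $\lambda$ are the roots $\lambda,\lambda+1,\dots,\lambda+(p-1)$ of $X^p-X-f$; after choosing the generator suitably we may take $\sigma(\lambda)=\lambda+1$, whence $\sigma^j(\lambda)=\lambda+j$. Both displayed sets and the maps $\operatorname{tr}$ and $\sigma-1$ become explicit operations on the coefficients $a_i$, and the whole argument reduces to tracking the valuation constraints $v_K(a_i)\ge is/p$.

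For part (1), I would compute the trace of a general $x=\sum_{i=0}^{p-1}a_i\lambda^i$ directly. Since $\operatorname{tr}_{L/K}(x)=\sum_{j=0}^{p-1}\sigma^j(x)=\sum_{i=0}^{p-1}a_i\sum_{j=0}^{p-1}(\lambda+j)^i$, expanding by the binomial theorem gives $\operatorname{tr}(x)=\sum_{i=0}^{p-1}a_i\sum_{\ell=0}^{i}\binom{i}{\ell}S_{i-\ell}\lambda^{\ell}$ with $S_m=\sum_{j=0}^{p-1}j^m$. Here Lemma \ref{sum} is exactly the input needed: in characteristic $p$ one has $S_m=0$ for $0\le m\le p-2$ and $S_{p-1}=-1$, so the only surviving contribution is the term $i=p-1,\ \ell=0$, yielding $\operatorname{tr}(x)=-a_{p-1}$. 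Thus $x$ has trace zero precisely when $a_{p-1}=0$, and imposing the valuation constraints of Proposition \ref{ol} gives the asserted description of $\sO_L^{\operatorname{tr}=0}$.

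For part (2), I would note first that $(\sigma-1)\lambda^i=(\lambda+1)^i-\lambda^i=\sum_{k=0}^{i-1}\binom{i}{k}\lambda^k$, so for $x=\sum_i a_i\lambda^i$ one gets $(\sigma-1)x=\sum_{k=0}^{p-2}b_k\lambda^k$ with $b_k=\sum_{i=k+1}^{p-1}\binom{i}{k}a_i$. The inclusion $\subseteq$ is then immediate: if $v_K(a_i)\ge is/p$ for all $i$, each summand of $b_k$ has valuation $\ge is/p\ge(k+1)s/p$ (binomial coefficients lie in $\sO_K$), so $v_K(b_k)\ge(k+1)s/p$, which is the defining bound of the target module.

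The reverse inclusion is the crux. Given $y=\sum_{k=0}^{p-2}b_k\lambda^k$ with $v_K(b_k)\ge(k+1)s/p$, I would solve $(\sigma-1)x=y$ for $x=\sum_{i=1}^{p-1}c_i\lambda^i$ by descending induction, exploiting that the system $b_k=\sum_{i=k+1}^{p-1}\binom{i}{k}c_i$ is triangular: the coefficient of the leading unknown $c_{k+1}$ is $\binom{k+1}{k}=k+1$, a unit in $\sO_K$ since $1\le k+1\le p-1$. Starting from $c_{p-1}=(p-1)^{-1}b_{p-2}$ and back-substituting via $c_{k+1}=(k+1)^{-1}\bigl(b_k-\sum_{i=k+2}^{p-1}\binom{i}{k}c_i\bigr)$, the bounds $v_K(b_k)\ge(k+1)s/p$ and $v_K(c_i)\ge is/p\ge(k+2)s/p$ for $i\ge k+2$ force $v_K(c_{k+1})\ge(k+1)s/p$. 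Hence $x=\sum c_i\lambda^i$ lies in $\sO_L$ by Proposition \ref{ol} and maps to $y$. I expect the only delicate point to be bookkeeping these valuation inequalities through the back-substitution; the structural input, namely triangularity together with the invertibility of $1,\dots,p-1$ modulo $p$, is what makes the surjectivity go through cleanly.
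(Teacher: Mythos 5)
Your proof is correct, and for part (1) and the forward inclusion of part (2) it coincides with the paper's argument: the same binomial expansion of the trace combined with Lemma \ref{sum} to get $\operatorname{tr}(x)=-a_{p-1}$, and the same valuation bookkeeping through the triangular system of binomial-coefficient equations. The one genuine difference is in the reverse inclusion of (2). The paper first invokes additive Hilbert 90, i.e.\ $H^1(G,L)=0$, to obtain \emph{some} preimage $\sum b_i\lambda^i\in L$ of the given element, and only afterwards uses the triangular system inductively to conclude that this preimage automatically satisfies $v_K(b_i)\geq \frac{is}{p}$ and hence lies in $\sO_L$. You instead solve the system $b_k=\sum_{i=k+1}^{p-1}\binom{i}{k}c_i$ outright by back-substitution, using that the diagonal entries $\binom{k+1}{k}=k+1$ are units in characteristic $p$ for $1\leq k+1\leq p-1$; existence of the preimage and the valuation bounds then come out of one and the same computation. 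Your route is therefore self-contained, needing no cohomological input, and in effect it reproves the relevant instance of $H^1(G,L)=0$ rather than quoting it; the paper's version outsources existence to a standard fact at the cost of leaving the linear-algebra structure (unique solvability of the triangular system) implicit. The valuation estimates are identical in both treatments, so nothing is lost either way, and your variant is arguably the cleaner one to read in isolation.
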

\begin{proof} Since the sets  $\sO_L^{\operatorname{tr}=0}$ and  $(\sigma-1)\sO_L$ are independent of the choice of $\sigma$, we may assume, without loss of generality, that  $\sigma$ is the generator satisfying $\sigma(\lambda)=\lambda+1$. \\ 
$(1)$ Let $x=\sum_{i=1}^{p-1}a_i\lambda^i$. Let $S_k$ be as in
Lemma \ref{sum}.  Then  \\
$ 
\begin{array}{rcl}
\operatorname{tr}(x) & =  & \displaystyle{\sum}_{j=0}^{p-1}\sigma^j(x)  \\
      & =  & \displaystyle{\sum}_{j=0}^{p-1}\displaystyle{\sum}_{i=0}^{p-1}a_i(\lambda+j)^i     \\
      & =  & \displaystyle{\sum}_{i=0}^{p-1}a_i\left(
\sum_{j=0}^{p-1}(\lambda+j)^i  \right)  
\end{array} \\
$
By binomially expanding and collecting coefficients of $\lambda^i$, we get \\
$
\begin{array}{rcll}
  \operatorname{tr}(x)    & =  & \displaystyle{\sum}_{i=0}^{p-1}a_i\left( p\lambda^i+
\displaystyle{\sum}_{j=1}^{i}\binom{i}{j}S_j\lambda^{i-j} \right)  \\
  
      & =  & -a_{p-1} \ \ \  \ \ \ ... \text{(by Lemma \ref{sum})}.
\end{array} 
$\\
This together with Proposition \ref{ol} proves $(1)$. \\

\noindent $(2)$ Suppose $x=\displaystyle{\sum}_{i=1}^{p-1}a_i\lambda^i\in
(\sigma-1)\sO_L$. Then 
$$ \displaystyle{\sum}_{i=1}^{p-1}a_i\lambda^i =
(\sigma-1)\displaystyle{\sum}_{i=1}^{p-1}b_i\lambda^i,$$
where $v_K(b_i)\geq \frac{is}{p}$ by \eqref{ol}. This gives us the following
system of $p$ equations \\ \\
$
\begin{array}{lcl}
a_0 & = & b_1 + \cdots + b_{p-1} \\
a_1 & = & \binom{2}{1}b_2+ \binom{3}{2}b_3 + \cdots + \binom{p-1}{p-2}b_{p-1} \\
. &  &  \\
a_i & = & \binom{i+1}{i}b_{i+1} + \cdots + \binom{p-1}{p-(i+1)}b_{p-1}\\
. &  &  \\
a_{p-2} & = & (p-1)b_{p-1}\\
a_{p-1} & = & 0 \\
\end{array}
$ \\ \\
Since $v_K(b_{i+1})\geq \frac{(i+1)s}{p}$, we get $v_K(a_i)\geq
\frac{(i+1)s}{p}$. 
Thus 
$$(\sigma-1)\sO_L \subset   \{ \displaystyle{\sum_{i=0}^{p-2}} a_i\lambda^i \ |
\ v_K(a_i)\geq \frac{(i+1)s}{p} \}.$$
Conversely assume  
$$ \displaystyle{\sum}_{i=1}^{p-1}a_i\lambda^i \in \{
\displaystyle{\sum_{i=0}^{p-2}} a_i\lambda^i \ | \ v_K(a_i)\geq \frac{(i+1)s}{p}
\}.$$
Since $H^1(G,L)=0$, we know there exists $\sum b_i\lambda^i \in L$ such that 
$$ \displaystyle{\sum}_{i=1}^{p-1}a_i\lambda^i =
(\sigma-1)\displaystyle{\sum}_{i=1}^{p-1}b_i\lambda^i.$$
These $b_i's$ satisfy the above system of $p$ equations. Using that
$v_K(a_i)\geq \frac{(i+1)s}{p}$, it is straightforward to observe by induction
that $v_K(b_i) \geq \frac{is}{p}$.  Hence
$\displaystyle{\sum}_{i=1}^{p-1}b_i\lambda^i \in \sO_L$. 
\end{proof}

The following corollary is the equi-characteristic analogue of \cite[Lemma 2.4]{lh}.
\begin{cor}\label{s}
Let $L/K$ be as in Proposition \ref{v_K(f)}. Let $x\in
\sO_L^{\operatorname{tr}=0}$ be an element that defines a non zero class in
$H^1(G,\sO_L)$. Then $v_L(x) \leq s-1$.
\end{cor}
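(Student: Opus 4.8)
The plan is to realize $H^1(G,\sO_L)$ as the explicit quotient furnished by Proposition \ref{h1} and then read off the valuation bound directly. Since $G=\langle\sigma\rangle$ is cyclic of order $p$, its first cohomology with coefficients in the module $\sO_L$ is computed by the standard two-term periodic complex, giving the identification
$$H^1(G,\sO_L) = \sO_L^{\operatorname{tr}=0}/(\sigma-1)\sO_L.$$
Thus an element $x\in\sO_L^{\operatorname{tr}=0}$ defines a nonzero class if and only if $x\notin(\sigma-1)\sO_L$, and the corollary becomes a statement comparing a valuation bound against the membership condition for $(\sigma-1)\sO_L$.

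Next I would write $x$ in the normal form supplied by Proposition \ref{h1}(1), namely $x=\sum_{i=0}^{p-2}a_i\lambda^i$ with $v_K(a_i)\geq \frac{is}{p}$. The key computational input, already established in the proof of Proposition \ref{ol}, is that the quantities $v_L(a_i\lambda^i)=p\,v_K(a_i)-is$ are pairwise distinct modulo $p$, hence pairwise distinct as integers. Consequently the valuation of the sum equals the minimum of the valuations of its terms, so that
$$v_L(x)=\min_{0\leq i\leq p-2}\bigl(p\,v_K(a_i)-is\bigr).$$

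Finally I would argue by contradiction. Suppose $v_L(x)\geq s$. Then for every index $i$ we have $p\,v_K(a_i)-is\geq s$, that is $v_K(a_i)\geq\frac{(i+1)s}{p}$. But this is precisely the membership condition for $(\sigma-1)\sO_L$ recorded in Proposition \ref{h1}(2), whence $x\in(\sigma-1)\sO_L$ and its class in $H^1(G,\sO_L)$ is zero, contrary to hypothesis. Therefore $v_L(x)<s$, and since $v_L$ takes integer values this yields $v_L(x)\leq s-1$.

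I do not expect any serious obstacle: the corollary is essentially a repackaging of the gap between the two valuation conditions in Proposition \ref{h1}, parts (1) and (2). The only points requiring care are the identification of $H^1(G,\sO_L)$ with the quotient $\sO_L^{\operatorname{tr}=0}/(\sigma-1)\sO_L$ and the justification that $v_L(x)$ is the minimum of the term valuations; both are routine, the latter following immediately from the distinctness of the valuations modulo $p$.
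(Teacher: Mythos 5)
Your proposal is correct and is essentially the paper's own proof: both write $x$ in the normal form of Proposition \ref{h1}(1), use the distinctness of the term valuations $v_L(a_i\lambda^i)$ from the proof of Proposition \ref{ol} to get $v_L(x)=\min_i v_L(a_i\lambda^i)$, and conclude that $v_L(x)\geq s$ forces the membership condition of Proposition \ref{h1}(2), hence a trivial class. The only cosmetic differences are that you argue by contradiction where the paper proves the contrapositive, and that you make explicit the identification $H^1(G,\sO_L)=\sO_L^{\operatorname{tr}=0}/(\sigma-1)\sO_L$ for cyclic $G$, which the paper leaves implicit.
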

\begin{proof} We will show that for any $x\in \sO_L^{\operatorname{tr}=0}$, if
$v_L(x)\geq s$, then the class of $x$ in $H^1(G,\sO_L)$ is zero. By \eqref{h1},
we may write $$x=\displaystyle{\sum}_{i=1}^{p-2}a_i\lambda^i \ \ \text{with} \ \
v_L(a_i)\geq is.$$
Since for all $i$, $v_L(a_i\lambda^i)$ are distinct (see proof of \eqref{ol}),
we have
$$ v_L(x) = \text{inf}\{v_L(a_i\lambda^i)\}.$$
Thus $v_L(x)\geq s$ implies 
$$ v_L(a_i\lambda^i)=v_L(a_i)-is \geq s \ \ \forall \  i.$$
This shows that $v_L(a_i) \geq (i+1)s$ which by Proposition \ref{h1} implies
$x \in (\sigma-1)\sO_L$, and hence defines a trivial class in $H^1(G,\sO_L)$.
This proves the claim.
\end{proof}

\section{Proof of the main theorem}

Following \cite[Lemma 3.1]{hp}, we reduce the proof of the main theorem to the case
when $L/K$ is totally ramified Galois extension of degree $p$. Thus throughout
this section we fix an extension $L/K$ which is of this type. We also fix a
generator $\sigma \in \Gal(L/K)$. We first define a polynomial $G \in
\Z[X_1,...,X_p]$ in $p$ variables by 
$$ G(X_1,...,X_p) = \frac{1}{p}\left(
(\displaystyle{\sum}_{i=1}^pX_i)^p-(\displaystyle{\sum}_{i=1}^pX_i^p)\right).$$
Note that despite the occurence of $\frac{1}{p}$, $G$ is a polynomial with
integral coefficients. \\

\noindent Now for an element $x\in L$ define 
$$F(x) = G(x,\sigma(x),...,\sigma^i(x),...,\sigma^{p-1}(x)).$$
The expression $F(x)$ is formally equal to the expression
$\frac{\operatorname{tr}(x)^p-\operatorname{tr}(x^p)}{p}$ and makes sense in
characteristic $p$ since $G$ has integral coefficients. Moreover, since for any
$x\in L$, $F(x)$ is invariant under the action of $\Gal(L/K)$, $F(x)\in K$. We
now observe that \cite[Lemma 2.2]{lh} holds in characteristic $p$ in the
following form

\begin{lemma}\cite[Lemma 2.2]{lh}\label{tr}
For all $x\in \sO_L$, $v_K(F(x))=v_L(x)$.
\end{lemma}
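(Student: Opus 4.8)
The plan is to prove the two inequalities $v_K(F(x)) \ge v_L(x)$ and $v_K(F(x)) \le v_L(x)$ separately, the first being formal and the second carrying all the content. Throughout I would write $x = \sum_{i=0}^{p-1} a_i\lambda^i$ with $a_i \in K$; by Proposition \ref{ol} the quantities $v_L(a_i\lambda^i) = pv_K(a_i) - is$ are pairwise distinct, so $m := v_L(x) = \min_i\bigl(pv_K(a_i) - is\bigr)$ is attained at a unique index $i_0$.

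For the lower bound I would simply observe that $F(x) = G(x,\sigma x,\dots,\sigma^{p-1}x)$ is a sum of monomials of total degree $p$ in the conjugates $\sigma^j(x)$, each of which satisfies $v_L(\sigma^j x) = v_L(x) = m$ since $\sigma$ preserves $v_L$. As $v_L$ is a valuation, every such monomial has $v_L \ge pm$, whence $v_L(F(x)) \ge pm$. Because $F(x) \in K$ and $v_L|_K = p\,v_K$, this yields $v_K(F(x)) \ge m = v_L(x)$ with no computation at all.

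The reverse inequality amounts to showing that the dominant contributions to $F(x)$ do not cancel, and here I would first reduce to a single monomial. Writing $x = a_{i_0}\lambda^{i_0} + x'$ with $v_L(x') > m$ and expanding each factor $\sigma^j(x) = \sigma^j(a_{i_0}\lambda^{i_0}) + \sigma^j(x')$ inside the degree-$p$ expression $G$, every term other than the pure term $G\bigl(a_{i_0}\lambda^{i_0},\dots,\sigma^{p-1}(a_{i_0}\lambda^{i_0})\bigr)$ contains at least one factor coming from $x'$, and hence has $v_L$ strictly greater than $pm$. Thus $v_L(F(x)) = v_L\bigl(F(a_{i_0}\lambda^{i_0})\bigr)$, provided the latter equals $pm$ exactly. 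Since $G$ is homogeneous of degree $p$ and $a_{i_0}\in K$ is $\sigma$-fixed, $F(a_{i_0}\lambda^{i_0}) = a_{i_0}^p\,F(\lambda^{i_0})$, so the whole lemma reduces to the single-monomial claim $v_K(F(\lambda^i)) = -is$ for every $0 \le i \le p-1$.

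This last computation is the heart of the matter and the main obstacle. The naive evaluation is useless: using $x^p = \sum a_i^p\lambda^{pi} = \sum a_i^p(\lambda+f)^i$ (Frobenius additivity together with $\lambda^p=\lambda+f$) and Lemma \ref{sum}, one computes $\operatorname{tr}(\lambda^i)$ and $\operatorname{tr}(\lambda^{ip})$ individually, but their leading terms cancel upon dividing by $p$ — reflecting that $F$ must be read off from the genuinely integral polynomial $G$ rather than from the formal quotient $\bigl(\operatorname{tr}(x)^p - \operatorname{tr}(x^p)\bigr)/p$. I would therefore expand $G\bigl(\lambda^i,(\lambda+1)^i,\dots,(\lambda+p-1)^i\bigr)$ retaining the integer coefficients $\tfrac1p\binom{p}{k_1,\dots,k_p}$, reduce them modulo $p$, and only then apply $\lambda^p = \lambda + f$; since $v_K(f) = -s$, the term of most negative valuation is a multiple of $f^i$, of valuation $-is$. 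The crux is to show that this leading coefficient is a nonzero element of $\F_p$, so that no cancellation occurs, and this is precisely where the congruences $S_k \equiv 0$ for $k \le p-2$ and $S_{p-1}\equiv -1$ of Lemma \ref{sum} enter (the cases $i=0,1$ give $F(1) = -1$ and $F(\lambda) = -f + (\text{const})$, confirming the pattern). Once this nonvanishing is established, combining it with the monomial reduction and the formal lower bound gives $v_K(F(x)) = v_L(x)$.
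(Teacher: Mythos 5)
Your overall architecture is sound, and in fact it is more than the paper itself provides: the paper states this lemma with no proof at all, simply quoting \cite{lh} and observing that $G$ has integral coefficients so that $F$ makes sense in characteristic $p$. Your formal lower bound, the reduction to the dominant monomial $a_{i_0}\lambda^{i_0}$ (using the pairwise-distinct valuations from Proposition \ref{ol}), and the homogeneity step $F(a_{i_0}\lambda^{i_0})=a_{i_0}^pF(\lambda^{i_0})$ are all correct, and they do reduce the lemma to the single claim $v_K(F(\lambda^i))=-is$. The problem is that you stop exactly at what you yourself call the crux: you never prove that the leading contribution to $F(\lambda^i)$ has a nonzero coefficient in $\F_p$; you only check $i=0,1$ and assert that Lemma \ref{sum} "enters here." That is a genuine gap, and it is the entire content of the lemma: every degree-$p$ monomial $\prod_j\bigl((\lambda+j)^i\bigr)^{k_j}$ appearing in $G$ has the \emph{same} valuation $-ips$, so potential cancellation among the terms with coefficients $\tfrac1p\binom{p}{k_1,\dots,k_p}$ is precisely what must be excluded, and nothing in your text excludes it. Moreover the tool you point to is the wrong one: the congruences for $S_k$ compute traces (as in Proposition \ref{h1}), but $F(\lambda^i)$ is not a trace, and in characteristic $p$ it cannot be reached through traces at all, since $\operatorname{tr}(x)^p-\operatorname{tr}(x^p)=0$ identically --- the very point you made yourself.

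The gap can be closed, and more simply than your plan of reducing multinomial coefficients and then substituting $\lambda^p=\lambda+f$. Each monomial $\prod_j(\lambda+j)^{ik_j}$ with $\sum_j k_j=p$ is a \emph{monic} polynomial in $\lambda$ of degree exactly $ip$, so over $\F_p$ one has
\begin{equation*}
F(\lambda^i)\;=\;\overline{G(1,\dots,1)}\,\lambda^{ip}\;+\;\sum_{t<ip}c_t\lambda^t,\qquad c_t\in\F_p,
\end{equation*}
and $G(1,\dots,1)=\tfrac1p(p^p-p)=p^{p-1}-1\equiv-1\pmod p$. Since $v_L(\lambda^t)=-ts$ with $s>0$, every lower-degree term has valuation strictly greater than $-ips$, whence $v_L(F(\lambda^i))=-ips$ and $v_K(F(\lambda^i))=-is$, as required. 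Note that the nonvanishing comes from Fermat's little theorem (equivalently Wilson's theorem) applied to the sum of all coefficients of $G$, not from Lemma \ref{sum}, and no substitution $\lambda^p=\lambda+f$ is needed. With this paragraph inserted at your "crux," your argument becomes a complete and correct proof.
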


\begin{proof}[Proof of \ref{main}]
The proof follows \cite[Proof of 1.4]{hp} verbatim, with \eqref{s} and
\eqref{tr} replacing \cite[Lemma 3.2]{hp} and \cite[Lemma 3.4]{hp} respectively.
We briefly recall the idea of the proof for the convenience of the reader.
By \cite[Lemma 1.1]{lh}, it is enough to show that for large $n$, the map 
$$ H^1(G,W_n(\sO_L))\to H^1(G,\sO_L)$$ 
is zero. By \eqref{s}, it is enough to show that for large $n$ 
$$ (x_0,...,x_{n-1}) \in W_n(\sO_L)^{\operatorname{tr}=0} \implies v_L(x_0)\geq s. $$
The condition $ (x_0,...,x_{n-1}) \in W_n(\sO_L)^{\operatorname{tr}=0}$ can be
rewritten as 
$$ \sum_{i=0}^{p-1}(\sigma^i(x_0),...,\sigma^i(x_{n-1}))=0.$$
Using the formula for addition of Witt vectors, one analyses the above equation
and obtains (see \cite[Lemma 3.5]{hp})

\begin{equation}\label{one} \operatorname{tr}(x_{\ell})=
F(x_{\ell-1})-C.\operatorname{tr}(x_{\ell-1})^p+h_{\ell-2} \ \ \ , \  1 \leq \ell \leq
n-1
\end{equation}
where $C$ is a fixed integer and $h_{\ell-2}$ is a polynomial in
$x_0,...,x_{\ell-2}$ and its conjugates such that each monomial appearing in
$h_{\ell-2}$ is of degree $\geq p^2$.  
Using the above equation, Lemma \ref{tr} and \cite[Lemma 2.1]{lh} one proves the
theorem in the following three steps, for details of which we refer the reader
to \cite[Proof of 1.4]{hp}. \\

\noindent Step $1$. We claim that for $0\leq \ell \leq n-2$ 
$$v_L(x_{\ell})\geq \frac{s(p-1)}{p}.$$
Recall equation \eqref{one}  for $1 \leq \ell \leq n-1$
$$\operatorname{tr}(x_{\ell})=
F(x_{\ell-1})-C.\operatorname{tr}(x_{\ell-1})^p+h_{\ell-2}.$$ One now proves the
above claim by induction on $\ell$.  Using the fact that $h_{-1} = 
\operatorname{tr}(x_0) = 0$, the above equation gives 
$$-\operatorname{tr}(x_1) = F(x_0). $$
This, together with \cite[Lemma 2.1]{lh} proves the claim for $\ell=0$. The rest
of the induction argument is straightforward.  This claim, together with
equation \eqref{one}, is then used  to observe $v_K(h_{\ell})\geq s(p-1)$ for
all $\ell$.\\

\noindent Step $2$. In this step we show that for $2\leq i \leq n-1$
$$ v_L(x_{n-i}) \geq
\frac{s(p-1)}{p}\left(1+\frac{1}{p}+\cdots+\frac{1}{p^{i-2}}\right).$$
This is proved by induction on $i$, using equation \eqref{one} and the estimates
$$v_L(x_{\ell})\geq \frac{s(p-1)}{p}
\ \ ,  \ \  v_L(h_{\ell})\geq s(p-1)$$ obtained in Step $1$.

\noindent Step $3$. Since values taken by $v_L$ is a discrete valuation, for an
integer $M$ such that 
$$\frac{s(p-1)}{p}\left(1+\frac{1}{p}+\cdots+\frac{1}{p^{M-2}}\right)>s-1, $$
we have $v_L(x_0)\geq s$.
\\

\end{proof}

\end{document}